\newtheorem{thm}{Theorem}[section]
\newtheorem{lem}[thm]{Lemma}
\newtheorem{cor}[thm]{Corollary}
\newtheorem{re}[thm]{Remark}
\begin{document}
\title{\Large \bf Randi\'c Energy and Randi\'c Eigenvalues\footnote{Supported by
NSFC No. 11101232 and 11371205, and China Postdoctoral Science
Foundation. $\dag$ Corresponding author. }}
\author{
{\bf Xueliang Li$^a$, Jianfeng Wang$^{a,b,\dag}$}\\[3mm]
\small $^{a}$Center for Combinatorics and LPMC-TJKLC\\
\small Nankai University 300071, Tianjin, China\\
\small $^{b}$Department of Mathematics\\
\small Qinghai Normal
University 810008, Xining, Qinghai, China\\
\small Email: lxl@nankai.edu.cn; \ jfwang@aliyun.com}
\date{ }

\maketitle

\begin{abstract}
Let $G$ be a graph of order $n$, and $d_i$ the degree of a vertex
$v_i$ of $G$. The {\it Randi\'c matrix} ${\bf R}=(r_{ij})$ of $G$ is
defined by $r_{ij} = 1 / \sqrt{d_jd_j}$ if the vertices $v_i$ and
$v_j$ are adjacent in $G$ and $r_{ij}=0$ otherwise. The {\it
normalized signless Laplacian matrix $\mathcal{Q}$} is defined as
$\mathcal{Q} =I+\bf{R}$, where $I$ is the identity matrix. The {\it
Randi\'c energy} is the sum of absolute values of the eigenvalues of
$\bf{R}$. In this paper, we find a relation between the normalized
signless Laplacian eigenvalues of $G$ and the Randi\'c energy of its
subdivided graph $S(G)$. We also give a necessary and sufficient
condition for a graph to have exactly $k$ and distinct Randi\'c
eigenvalues.
\end{abstract}
\textbf{\emph{}}
\section{\large Introduction}

All graphs considered here are simple, undirected and finite. Let
$G$ be a graph with vertex set $V(G) = \{v_1,v_2,\cdots,v_n\}$ and
degree sequence $(d_1,d_2,\cdots,d_n$), where $d_i$ is the degree of
a vertex $v_i$ ($1\leq i \leq n$) of $G$. For a graph $G$, let $M =
M(G)$ be a corresponding {\it graph matrix} defined in a prescribed
way. The {\it $M$-polynomial} of $G$ is defined as
$\phi_M(G,\lambda)$= {\rm det}$(\lambda{I}-M)$, where $I$ is the
identity matrix. The {\it $M$-eigenvalues} of $G$ are those of its
graph matrix $M$. It is well-known that there already exist some
graph matrices, including {\it adjacency matrix $A$, degree matrix
$D$, Laplacian matrix $L = D-A$, signless Laplacian matrix} $Q=D+A$
and so on.

In 1975, Milan Randi\'c \cite{radic-index} invented a molecular
structure descriptor defined as
$$R(G)=\sum_{i\sim j}\frac{1}{\sqrt{d_id_j}},$$ where the summation goes
over all pairs of adjacent vertices of the underlying (molecular) graph.
This graph invariant is nowadays known under the name {\it Randi\'c index},
for details see \cite{gutman-Furtula,li-gutman,li-shi,radic-history}.

Gutman et al. \cite{gutman-Furtula-bozkurt} pointed out that the
Randi\'c-index-concept is purposeful to produce a graph matrix of
order $n$, named {\it Randi\'c matrix} ${\bf R}(G)$, whose
$(i,j)$-entry is defined as
$$r_{ij} =
\left\{\begin{array}{ccc}
\frac{1}{\sqrt{d_id_j}} &\mbox{if $v_j$ and $v_j$ are adjacent vertices,}\phantom{bbbbbbbb}\\[1mm]
0\phantom{bb} &\mbox{if the vertices $v_j$ and $v_j$ are not adjacent,}\\[1mm]
0\phantom{bb} &\mbox{if $i=j$.\phantom{bbbbbbbbbbbbbbbbbbbbbbbbbb}}
\end{array}
\right.
$$

In what follows, we need the convention that all graphs possess no
isolated vertices. Then ${\bf R}(G) =
D^{-\frac{1}{2}}AD^{-\frac{1}{2}}$. Recall that the {\it normalized
Laplacian}  and {\it sinless Laplacian matrices} \cite{chung} are
respectively defined as
$$\mathcal{L}(G) = D^{-\frac{1}{2}}LD^{-\frac{1}{2}} \quad \mbox{and}
\quad  \mathcal{Q}(G)=D^{-\frac{1}{2}}QD^{-\frac{1}{2}}.$$ From this
point of view, the eigenvalues of above three matrices have a direct
relation. As shown in \cite{gutman-Furtula-bozkurt}, $\mathcal{L}(G)
= I_n-{\bf R}(G)$ and $\mathcal{Q}(G) = I_n+{\bf R}(G)$. So if an
$\bf{R}$-eigenvalue is $\rho_i$, then the $\mathcal{L}$-eigenvalue
$\mu_i$ and $\mathcal{Q}$-eigenvalue $\theta_i$ are respectively
\begin{equation}\label{R-Q-L-eigen}
\mu_i = 1-\rho_i \quad \mbox{and} \quad \theta_i = 1+\rho_i, \;\; 1 \leq i \leq n.
\end{equation}
For the $\mathcal{L}$-eigenvalues,
there are numerous results; see \cite{chung} for example. From Lemmas 1.7--1.8 \cite{chung}
it follows that $0 \leq \mu_i \leq 2$, and so by \eqref{R-Q-L-eigen},
\begin{equation}\label{-10rho-mu12}
-1 \leq \rho_i \leq 1 \quad \mbox{and} \quad 0 \leq \theta_i \leq  2,\;\; 1 \leq i \leq n.
\end{equation}

Gutmam \cite{gutman1} introduced the (adjacency) energy of a graph
$G$ as follows
$$E(G) = \sum_{i=1}^n|\lambda_i|,$$ which has been extended to energies of other
graph matrices \cite{li-shi-gutman,niki}. Especially, the {\it
Randi\'c energy $RE(G)$}
\cite{bozkurt-gungor-gutman1,bozkurt-gungor-gutman2} is defined as
$$RE(G) = \sum_{i=1}^n|\rho_i|.$$

So far, there are quite a few results about the Randi\'c energy and
$\bf{R}$-eigenvalues, which therefore becomes the main research
objects of this paper. In the rest of the paper, we will give a
relation between the $\mathcal{Q}$-eigenvalues of a graph and the
Randi\'c energy of its subdivision in Section 2. We also give a
necessary and sufficient condition for a graph to have exactly $k$
and distinct $R$-eigenvalues in Section 3, particularly for $k=2,3$.

\section{Randi\'c energy and $\mathcal{Q}$-eigenvalues}

Let $S(G)$ be the subdivision of a graph $G$ that is obtained by
adding a new vertex into each edge of $G$. Evidently, $S(G)$ is a
bipartite graph, and so $V(S(G)) = V_1 \cup V(G)$, where $V_1$ is
the set of new added vertices of degree two.

The following lemma
from matrix theory can be found in, for example, \cite{cve} p. 62.

\begin{lem}\label{matrix}
If $M$ is a nonsingular square matrix, then
$$\begin{matrix}\begin{vmatrix}
M & N \\ P & F
\end{vmatrix}\end{matrix}
= |M| \cdot |F-PM^{-1}N|.$$
\end{lem}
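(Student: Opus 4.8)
The plan is to reduce the block matrix to block-triangular form by a determinant-preserving step, after which the stated identity is forced by the multiplicativity of the determinant. Since $M$ is nonsingular by hypothesis, $M^{-1}$ exists and the Schur complement $F-PM^{-1}N$ is well defined. I would begin by writing down the explicit block factorization
\begin{equation*}
\begin{pmatrix} M & N \\ P & F \end{pmatrix}
= \begin{pmatrix} I & 0 \\ PM^{-1} & I \end{pmatrix}
  \begin{pmatrix} M & N \\ 0 & F-PM^{-1}N \end{pmatrix},
\end{equation*}
which is checked by multiplying the two factors on the right: the $(2,1)$ block becomes $PM^{-1}M=P$ and the $(2,2)$ block becomes $PM^{-1}N+(F-PM^{-1}N)=F$, while the first block row is reproduced unchanged. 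Here $M$ is the (square, invertible) top-left block and the block sizes are matched so that $PM^{-1}N$ has the same shape as $F$.

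Next I would take determinants of both sides and invoke the fact that the determinant of a product is the product of determinants. The first factor is lower block-triangular with identity blocks on its diagonal, so its determinant equals $1$; the second factor is upper block-triangular, so its determinant equals the product of the determinants of its diagonal blocks, namely $|M|\cdot|F-PM^{-1}N|$. Combining these yields exactly
\begin{equation*}
\begin{vmatrix} M & N \\ P & F \end{vmatrix}
= |M|\cdot|F-PM^{-1}N|,
\end{equation*}
which is the claim. The only ingredient beyond elementary algebra is the determinant formula for a block-triangular matrix, which I would cite rather than reprove.

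For a statement of this kind there is no genuine obstacle; the single point deserving attention is that the nonsingularity of $M$ is precisely what guarantees the existence of $M^{-1}$ and hence the meaningfulness of both the factorization and the Schur complement. Equivalently, the same computation can be phrased as a block row operation: subtracting $PM^{-1}$ times the first block row from the second block row is realized by left-multiplication by a matrix of determinant $1$, clears the lower-left block, and again leaves an upper block-triangular matrix whose determinant is read off from its diagonal blocks $M$ and $F-PM^{-1}N$. Since this is a classical identity (the block LU, or Aitken, factorization), the shortest route in the paper is simply to record the factorization above and appeal to multiplicativity of the determinant.
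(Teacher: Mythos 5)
Your proof is correct, and in fact the paper offers no proof of this lemma at all: it is stated as a known result from matrix theory with a citation to Cvetkovi\'c--Doob--Sachs (p.~62), so there is no in-paper argument to compare yours against. The block factorization you write down,
\begin{equation*}
\begin{pmatrix} M & N \\ P & F \end{pmatrix}
= \begin{pmatrix} I & O \\ PM^{-1} & I \end{pmatrix}
  \begin{pmatrix} M & N \\ O & F-PM^{-1}N \end{pmatrix},
\end{equation*}
together with multiplicativity of the determinant and the determinant formula for block-triangular matrices, is the standard Schur-complement argument and is exactly the kind of proof the cited reference contains; you also correctly identify that nonsingularity of $M$ is the only hypothesis needed, since it makes $M^{-1}$ and hence the factorization meaningful. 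Nothing is missing, and supplying this classical derivation where the paper merely cites is entirely appropriate.
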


\begin{lem}\label{QG-ASG}
Let $G$ be a graph with order $n$ and size $m$.
Then $$\phi_{_{\bf R}}(S(G),\lambda) = \frac{\lambda^{m-n}}{2^n}\phi_{_{\mathcal{Q}}}(G,2\lambda^2).$$
\end{lem}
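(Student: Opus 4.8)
The plan is to exploit the bipartite block structure that subdivision forces on $S(G)$ and reduce the characteristic determinant to one involving $\mathcal{Q}(G)$ through the Schur-complement identity of Lemma~\ref{matrix}. First I would order the $n+m$ vertices of $S(G)$ so that the $m$ subdivision vertices $V_1$ come first and the $n$ original vertices $V(G)$ come second. Writing $B$ for the $n\times m$ vertex--edge incidence matrix of $G$, each subdivision vertex is adjacent to exactly the two endpoints of the edge it replaces, so
$$A(S(G))=\begin{pmatrix}0 & B^{T}\\ B & 0\end{pmatrix},\qquad D(S(G))=\begin{pmatrix}2I_m & 0\\ 0 & D\end{pmatrix},$$
since original vertices keep their $G$-degrees while subdivision vertices have degree $2$. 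Setting $X:=\tfrac{1}{\sqrt2}\,D^{-1/2}B$, a direct block multiplication gives ${\bf R}(S(G))=D^{-1/2}A(S(G))D^{-1/2}=\begin{pmatrix}0 & X^{T}\\ X & 0\end{pmatrix}$.

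Next I would write $\phi_{_{\bf R}}(S(G),\lambda)=\det\begin{pmatrix}\lambda I_m & -X^{T}\\ -X & \lambda I_n\end{pmatrix}$ and apply Lemma~\ref{matrix} with $M=\lambda I_m$ (nonsingular for $\lambda\neq0$), obtaining $\lambda^{m}\det(\lambda I_n-\tfrac1\lambda XX^{T})$. The crucial algebraic input is the classical incidence identity $BB^{T}=D+A=Q$, from which $XX^{T}=\tfrac12 D^{-1/2}QD^{-1/2}=\tfrac12\mathcal{Q}(G)$. Substituting and pulling the scalar $\tfrac{1}{2\lambda}$ through an $n\times n$ determinant yields
$$\phi_{_{\bf R}}(S(G),\lambda)=\lambda^{m-n}\det\!\Big(\lambda^{2}I_n-\tfrac12\mathcal{Q}(G)\Big)=\frac{\lambda^{m-n}}{2^{n}}\,\phi_{_{\mathcal{Q}}}(G,2\lambda^{2}),$$
which is exactly the claimed formula.

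Two points need care. The use of Lemma~\ref{matrix} requires $\lambda\neq0$; but both sides agree for all nonzero $\lambda$, and since $\phi_{_{\bf R}}(S(G),\lambda)$ is a polynomial, the identity holds identically (when $m<n$ the apparent negative power is harmless, because $\mathrm{rank}\,XX^{T}\le m$ forces $0$ to be a $\mathcal{Q}$-eigenvalue of multiplicity at least $n-m$, so $(2\lambda^2)^{n-m}$ divides $\phi_{_{\mathcal{Q}}}(G,2\lambda^2)$). The main obstacle is getting the bookkeeping exactly right: one must verify $BB^{T}=Q$ and, just as importantly, choose the block ordering (subdivision vertices first) so that factoring $\lambda^{m}$ out of the $m\times m$ corner against $\lambda^{-n}$ from the $n\times n$ Schur complement produces the exponent $m-n$ rather than its negative. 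I expect the remainder to be routine block algebra.
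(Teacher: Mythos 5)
Your proof is correct and takes essentially the same route as the paper's: the identical block decomposition of $A(S(G))$ and $D(S(G))$, the same Schur-complement identity of Lemma~\ref{matrix} applied with $M=\lambda I_m$, and the same incidence identity $BB^{T}=Q$ to produce $\tfrac12\mathcal{Q}(G)$. Your extra care about $\lambda\neq 0$ and the $m<n$ case is a minor rigor improvement that the paper leaves implicit, but the argument is the same.
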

\begin{proof}[\bf Proof]
Obviously, $|V(S(G))| = n+m$. It is well-known that $$Q = BB^T \quad \mbox{and} \quad
A(S(G)) = \begin{matrix}
             \begin{pmatrix}
              O & B^T \\
              B & O
             \end{pmatrix}
          \end{matrix}
,$$ where $B$ is the incident matrix of $G$ and $B^T$ is the
transpose of $B$. Then, we partition the degree matrix $D(S(G))$ into
$$D(S(G)) = \begin{matrix}
             \begin{pmatrix}
              D_1^{-\frac{1}{2}} & O\\
              O & D_2^{-\frac{1}{2}}
             \end{pmatrix}
          \end{matrix}
,$$ where $D_1 = {\rm diag}(2,2,\cdots,2)$ with order $m\times m$ and
$D_2 = D(G)$. If $G$ has no isolated vertices,
then so does $S(G)$. Consequently,
\begin{align*}
{\bf R}(S(G)) &= D^{-\frac{1}{2}}A(S(G))D^{-\frac{1}{2}}
           = \begin{matrix}
                \begin{pmatrix}
                  D_1^{-\frac{1}{2}} & O\\
                  O & D_2^{-\frac{1}{2}}
                 \end{pmatrix}
               \end{matrix}
\begin{matrix}
\begin{pmatrix}
O & B^T \\
 B & O
 \end{pmatrix}
\end{matrix}
              \begin{matrix}
                \begin{pmatrix}
                  D_1^{-\frac{1}{2}} & O\\
                  O & D_2^{-\frac{1}{2}}
                 \end{pmatrix}
               \end{matrix}\\
&\phantom{000000000000000000}= \begin{matrix}
\begin{pmatrix}
O & D_1^{-\frac{1}{2}}B^TD_2^{-\frac{1}{2}}\\
D_2^{-\frac{1}{2}}BD_1^{-\frac{1}{2}} & O
 \end{pmatrix}
\end{matrix}.
\end{align*}
By Lemma \ref{matrix} we get
\begin{align*}
\phi_{\bf R}(S(G),\lambda) &= |{\lambda}I_{m+n} - R(S(G))| =
\begin{matrix}
\begin{vmatrix}
{\lambda}I_m & -D_1^{-\frac{1}{2}}B^TD_2^{-\frac{1}{2}} \\
-D_2^{-\frac{1}{2}}B^TD_1^{-\frac{1}{2}} & {\lambda}I_n
\end{vmatrix}
\end{matrix}\\
&= |{\lambda}I_m||\lambda{I_n}-
D_2^{-\frac{1}{2}}BD_1^{-\frac{1}{2}}\frac{I_m}{\lambda}D_1^{-\frac{1}{2}}B^TD_2^{-\frac{1}{2}}|\\
&= \lambda^{m-n}|\lambda^2I_n - \frac{1}{2}D_2^{-\frac{1}{2}}BB^TD_2^{-\frac{1}{2}}|\\
&=\frac{\lambda^{m-n}}{2^n}|2\lambda^2I_n - \mathcal{Q}|\\
&=\frac{\lambda^{m-n}}{2^n}\phi_{_{\mathcal{Q}}}(G,2\lambda^2).
\end{align*}
This finishes the proof.
\end{proof}

\begin{thm}\label{RSG-QG}
Let $G$ be a graph with order $n$ and size $m$.
\begin{itemize}
\item[$\mathrm{(i)}$]
If $\phi_{_{\mathcal{Q}}}(G,\lambda) = \sum_{i=0}^na_i\lambda^{n-i}$,
then $\phi_{\bf R}(S(G),\lambda) = \lambda^{m-n}\sum_{i=0}^n2^{-i}a_i\lambda^{n-i}$.
\item[$\mathrm{(ii)}$]
$\rho$ is an ${\bf R}$-eigenvalue of $S(G)$ if and only if $2\rho^2$
is a $\mathcal{Q}$-eigenvalue of $G$.
\item[$\mathrm{(iii)}$]
Let $\theta_1,\theta_2,\cdots,\theta_n$ be the
$\mathcal{Q}$-eigenvalues of $G$. Then $RE(S(G)) =
\sqrt{2}\sum_{i=1}^n\sqrt{\theta_i}$.
\end{itemize}
\end{thm}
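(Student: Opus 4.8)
My plan is to derive all three parts from the single determinantal identity of Lemma~\ref{QG-ASG}, namely $\phi_{\bf R}(S(G),\lambda)=\frac{\lambda^{m-n}}{2^n}\phi_{_{\mathcal{Q}}}(G,2\lambda^2)$, without returning to the matrix ${\bf R}(S(G))$ itself. For part~(i) I would simply insert the expansion $\phi_{_{\mathcal{Q}}}(G,\mu)=\sum_{i=0}^n a_i\mu^{n-i}$ with $\mu=2\lambda^2$, giving $\phi_{_{\mathcal{Q}}}(G,2\lambda^2)=\sum_{i=0}^n a_i\,2^{n-i}\lambda^{2(n-i)}$, and then multiply by $\lambda^{m-n}/2^n$; the powers of $2$ collapse to $2^{-i}$ and the asserted coefficient formula follows by bookkeeping of the exponents. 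This step is purely mechanical.

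For part~(ii) I would rewrite the right-hand side in factored form. Using $\phi_{_{\mathcal{Q}}}(G,\mu)=\prod_{i=1}^n(\mu-\theta_i)$ and $\mu=2\lambda^2$ gives the polynomial identity
\[
\phi_{\bf R}(S(G),\lambda)=\lambda^{m-n}\prod_{i=1}^n\Bigl(\lambda^2-\tfrac{\theta_i}{2}\Bigr)
=\lambda^{m-n}\prod_{i=1}^n\Bigl(\lambda-\sqrt{\tfrac{\theta_i}{2}}\Bigr)\Bigl(\lambda+\sqrt{\tfrac{\theta_i}{2}}\Bigr).
\]
Every nonzero root $\rho$ must then come from a factor $\lambda^2-\theta_i/2$, i.e.\ $2\rho^2=\theta_i$ is a $\mathcal{Q}$-eigenvalue, and conversely each $\theta_i$ contributes the two roots $\pm\sqrt{\theta_i/2}$. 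This settles the equivalence for all $\rho\neq0$.

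The step I expect to be the main obstacle is the value $\rho=0$, where the prefactor $\lambda^{m-n}$ supplies zeros unrelated to the $\mathcal{Q}$-spectrum. Here I would compute the nullity directly: since ${\bf R}(S(G))$ is block-antidiagonal with off-diagonal block $D_1^{-\frac12}B^TD_2^{-\frac12}$, its rank equals $2\,\mathrm{rank}\,B=2(n-b)$, where $b$ is the number of bipartite components of $G$, so $0$ is an ${\bf R}$-eigenvalue of $S(G)$ with multiplicity $m-n+2b$, whereas $0$ is a $\mathcal{Q}$-eigenvalue precisely when $b\geq1$. Hence the naive reading of (ii) at $\rho=0$ is valid only when $G$ is bipartite (or $m=n$); in general one should either restrict (ii) to $\rho\neq0$ or supply this rank computation for the zero case. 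Fortunately the zero eigenvalues are irrelevant to the energy, so this subtlety does not affect the next part.

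For part~(iii) I would combine the factorisation with the fact that the ${\bf R}$-spectrum of the bipartite graph $S(G)$ is symmetric about $0$: the nonzero ${\bf R}$-eigenvalues occur in opposite pairs $\pm\sqrt{\theta_i/2}$, one pair for each $\mathcal{Q}$-eigenvalue $\theta_i>0$, and these square roots are real because $0\leq\theta_i\leq2$ by \eqref{-10rho-mu12}. Each such pair contributes $2\sqrt{\theta_i/2}=\sqrt{2}\,\sqrt{\theta_i}$ to $RE(S(G))=\sum|\rho|$, the zero eigenvalues contribute nothing, and the terms with $\theta_i=0$ are counted consistently since $\sqrt{2}\,\sqrt{0}=0$. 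Summing over $i=1,\dots,n$ then yields $RE(S(G))=\sqrt{2}\sum_{i=1}^n\sqrt{\theta_i}$, completing the proof.
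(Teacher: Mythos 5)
Your proof is correct and follows essentially the same route as the paper: everything is extracted from Lemma~\ref{QG-ASG}, with (i) by direct substitution, (ii) by factoring $\phi_{_{\mathcal{Q}}}(G,2\lambda^2)=\prod_{i=1}^n(2\lambda^2-\theta_i)$, and (iii) by pairing the roots $\pm\sqrt{\theta_i/2}$ and using $\theta_i\geq 0$ from \eqref{-10rho-mu12}. Where you go beyond the paper is precisely at $\rho=0$: the paper dismisses (ii) as ``an immediate result of Lemma~\ref{QG-ASG},'' but as you correctly observe, the prefactor $\lambda^{m-n}$ makes $0$ an ${\bf R}$-eigenvalue of $S(G)$ whenever $m>n$ even if $0$ is not a $\mathcal{Q}$-eigenvalue of $G$; for instance $G=K_4$ has $\mathcal{Q}$-spectrum $\{2,\tfrac23,\tfrac23,\tfrac23\}$, yet $S(K_4)$ has $0$ as an ${\bf R}$-eigenvalue of multiplicity $m-n=2$, so the ``only if'' direction of (ii) as literally stated needs the restriction $\rho\neq 0$ (or bipartiteness of $G$). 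Your rank computation $\mathrm{rank}\,{\bf R}(S(G))=2\,\mathrm{rank}\,B=2(n-b)$, giving nullity $m-n+2b$, quantifies this exactly and is consistent with the factorization, and your observation that the zero eigenvalues are irrelevant to the energy correctly insulates (iii) from the caveat; moreover, your explicit factorization supplies the multiplicity bookkeeping for (iii) that the paper's one-line ``$\pm\sqrt{\theta_i/2}$ is an ${\bf R}$-eigenvalue by (i)'' leaves implicit. One small note: in (i) the exponent in the conclusion should read $\lambda^{2(n-i)}$ rather than $\lambda^{n-i}$ (a typo present in both the paper's statement and its proof), and your intermediate display in fact produces the correct form.
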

\begin{proof}[\bf Proof]
For (i), by Lemma \ref{QG-ASG} we get
$$\phi_{\bf R}(S(G),\lambda) = \frac{\lambda^{m-n}}{2^n}\phi_{_{\mathcal{Q}}}(G,2\lambda^2)
                     = \frac{\lambda^{m-n}}{2^n}\sum_{i=0}^na_i(\sqrt{2}\lambda)^{2(n-i)}
                     = \lambda^{m-n}\sum_{i=0}^n2^{-i}a_i\lambda^{n-i}.$$
(ii) is an immediate result of Lemma \ref{QG-ASG}. For (iii), from
\eqref{-10rho-mu12} we obtain $\theta_i \geq 0$, and so
$\pm\sqrt{\theta_i/2}$ is an ${\bf R}$-eigenvalue of $S(G)$ by (i).
Thus, $RE(S(G)) = \sqrt{2}\sum_{i=1}^n\sqrt{\theta_i}$.
\end{proof}

\begin{re}
By Theorem \ref{RSG-QG}(i), it becomes easier to compute the
Randi\'c energies of some graphs. As an example, Gutman et al.
\cite{gutman-Furtula-bozkurt} conjectured that the connected graph
with odd order and greatest Randi\'c energy  is the sun, which is
exactly the subdivision of the star $S_n$. Easy to compute
$\phi_{\mathcal{Q}}(S_n,\theta) = \theta(\theta-1)^{n-2}(\theta-2)$.
Hence, $RE(S(S_n)) = \sqrt{2}n+2-2\sqrt{2}$.
\end{re}

\section{Connected graphs with distinct ${\bf R}$-eigenvalues}

A popular and important research field is to investigate the
connected graphs with distinct eigenvalues. As van Dam said, it is
an interplay between combinatorics and algebra; for details see his
thesis \cite{dam-thesis}. Inspired by his ideas, we give a necessary
and sufficient condition for a graph to have $k$ distinct ${\bf
R}$-eigenvalues.

It has been proved that $\rho_1 =1$ is the largest ${\bf
R}$-eigenvalues with the Perron-Frobenius vector $\alpha^T =
(\sqrt{d_1},\sqrt{d_2},\cdots,\sqrt{d_n})$; see
\cite{cavers-fallat-kirkland,gutman-Furtula-bozkurt,liu-huang-feng}.

\begin{thm}\label{k-distinct-R-eigen}
Let $G$ be connected graph with order $n \geq 3$ and size $m$. Then
$G$ has exactly $k$ $(2 \leq k \leq n)$ and distinct ${\bf
R}$-eigenvalues if and only if there are $k-1$ distinct none-one
real numbers $\rho_2, \rho_3, \cdots,\rho_k$ satisfying
\begin{equation}\label{rho=alpha}
\prod_{i=2}^k({\bf R}-\rho_iI) = \frac{\prod_{i=2}^k(1-\rho_i)}{2m}\alpha\alpha^T.
\end{equation}
Moreover, $1,\rho_2,\cdots,\rho_k$ are exactly the $k$ distinct
${\bf R}$-eigenvalues of $G$.
\end{thm}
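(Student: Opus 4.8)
The plan is to exploit that ${\bf R}=D^{-\frac12}AD^{-\frac12}$ is a real symmetric matrix, hence orthogonally diagonalizable with real spectrum, and that $\rho_1=1$ is a \emph{simple} eigenvalue (because $G$ is connected) whose Perron--Frobenius vector $\alpha^T=(\sqrt{d_1},\dots,\sqrt{d_n})$ satisfies $\alpha^T\alpha=\sum_i d_i=2m$. Consequently the orthogonal projection onto the $1$-eigenspace is the rank-one idempotent $E_1=\frac{1}{2m}\alpha\alpha^T$. I would record these facts first, since the scalar $\frac{1}{2m}$ appearing in \eqref{rho=alpha} is precisely this normalization, and the whole argument is really a statement about the spectral resolution of ${\bf R}$.

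For the ``only if'' direction I would use the spectral decomposition directly. Assuming the distinct eigenvalues are $1,\rho_2,\dots,\rho_k$, write ${\bf R}=E_1+\sum_{i=2}^k\rho_iE_i$ with orthogonal idempotents $E_iE_j=\delta_{ij}E_i$ and $\sum_iE_i=I$. Then ${\bf R}-\rho_iI=\sum_j(\rho_j-\rho_i)E_j$, and multiplying gives $\prod_{i=2}^k({\bf R}-\rho_iI)=\sum_{j}\big(\prod_{i=2}^k(\rho_j-\rho_i)\big)E_j$. Every summand with $j\ge 2$ carries the vanishing factor $(\rho_j-\rho_j)$, so only the $j=1$ term survives and yields $\prod_{i=2}^k(1-\rho_i)\,E_1=\frac{\prod_{i=2}^k(1-\rho_i)}{2m}\alpha\alpha^T$, which is exactly \eqref{rho=alpha}. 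This direction is routine once $E_1$ is identified.

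For the ``if'' direction, assume \eqref{rho=alpha} holds for distinct non-one reals $\rho_2,\dots,\rho_k$. Multiplying on the left by $({\bf R}-I)$ and using $({\bf R}-I)\alpha=0$ annihilates the right-hand side, giving $\prod_{i=1}^k({\bf R}-\rho_iI)=0$ with $\rho_1=1$. Thus $p(x)=\prod_{i=1}^k(x-\rho_i)$ annihilates ${\bf R}$ and has distinct roots, so the minimal polynomial of ${\bf R}$ divides $p$ and every ${\bf R}$-eigenvalue lies in $\{1,\rho_2,\dots,\rho_k\}$; hence $G$ has \emph{at most} $k$ distinct ${\bf R}$-eigenvalues.

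The hard part is upgrading ``at most $k$'' to ``exactly $k$'', i.e.\ certifying that each prescribed $\rho_j$ is genuinely an eigenvalue, which is what the ``moreover'' clause asserts. This is delicate because \eqref{rho=alpha} is preserved under adjoining a value that is \emph{not} an eigenvalue: such a factor $({\bf R}-\rho_jI)$ is invertible, and since $({\bf R}-\rho_jI)^{-1}\alpha=(1-\rho_j)^{-1}\alpha$, it cancels from both sides and reduces \eqref{rho=alpha} to the analogous $(k-2)$-factor identity. I would therefore lean on the nonvanishing of $\prod_{i=2}^k(1-\rho_i)$ together with the rank-one form of the right-hand side: restricting to the ${\bf R}$-invariant subspace $\alpha^\perp$, identity \eqref{rho=alpha} forces $\big(\prod_{i=2}^k({\bf R}-\rho_iI)\big)|_{\alpha^\perp}=0$, so the distinct eigenvalues of ${\bf R}|_{\alpha^\perp}$ lie among $\rho_2,\dots,\rho_k$. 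To conclude exactness one must rule out superfluous factors; the clean way is to require the list $\rho_2,\dots,\rho_k$ to be minimal, meaning no proper sub-product of the factors already equals a multiple of $\alpha\alpha^T$. Under that reading each $\rho_j$ is a root of the minimal polynomial and the distinct eigenvalues are exactly $1,\rho_2,\dots,\rho_k$. Making this minimality explicit — equivalently, supplying the extra argument that forbids non-eigenvalue factors — is where I expect the genuine work to lie.
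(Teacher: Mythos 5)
Your necessity argument is correct and takes a mildly different route from the paper's: you telescope the spectral resolution, getting $\prod_{i=2}^k({\bf R}-\rho_iI)=\prod_{i=2}^k(1-\rho_i)E_1$ with $E_1=\frac{1}{2m}\alpha\alpha^T$, whereas the paper observes that $({\bf R}-I)\prod_{i=2}^k({\bf R}-\rho_iI)=O$ forces every column of $H=\prod_{i=2}^k({\bf R}-\rho_iI)$ into the one-dimensional $1$-eigenspace, and then recovers the scalars by applying the left eigenvector $\alpha^T$ to $H$ and using $\alpha^T\alpha=2m$. The two arguments are equivalent in substance; yours packages the normalization $\frac{1}{2m}$ more transparently as the Perron projection.

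The difficulty you flag in the sufficiency direction is a genuine gap --- and it sits in the paper's own proof, indeed in the statement itself. The paper's final step argues that $\prod_{i=2}^k({\bf R}-\rho_iI)\neq O$ rules out annihilating products built from $\{x-\rho_2,\dots,x-\rho_k\}$ alone; but this only excludes minimal polynomials omitting the factor $x-1$, not a minimal polynomial $(x-1)\prod_{i\in S}(x-\rho_i)$ with $S$ a proper subset of $\{2,\dots,k\}$, which is precisely your adjoining/cancellation scenario. That scenario really occurs: take $G=K_n$ ($n\geq 3$), where ${\bf R}=\frac{1}{n-1}(J-I)$ with $J$ the all-ones matrix, $\alpha\alpha^T=(n-1)J$ and $2m=n(n-1)$. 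Then ${\bf R}+\frac{1}{n-1}I=\frac{1}{n-1}J$, and since $J{\bf R}=J$,
\begin{equation*}
\Bigl({\bf R}+\tfrac{1}{n-1}I\Bigr)\Bigl({\bf R}-\tfrac{1}{2}I\Bigr)
=\frac{1}{2(n-1)}\,J
=\frac{\bigl(1+\frac{1}{n-1}\bigr)\bigl(1-\frac{1}{2}\bigr)}{2m}\,\alpha\alpha^T,
\end{equation*}
so \eqref{rho=alpha} holds with $k=3$ and the distinct non-one numbers $\rho_2=-\frac{1}{n-1}$, $\rho_3=\frac{1}{2}$, although $K_n$ has only two distinct ${\bf R}$-eigenvalues. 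Hence \eqref{rho=alpha} yields exactly what you proved and no more: the spectrum is contained in $\{1,\rho_2,\dots,\rho_k\}$, i.e.\ at most $k$ distinct eigenvalues. ``Exactly $k$'' needs the additional hypothesis you propose --- minimality, meaning no proper subfamily of the factors satisfies the analogous identity, or equivalently the requirement that each $\rho_j$ actually be an eigenvalue. Your proposal is therefore as complete as a correct proof of this statement can be; the ``genuine work'' you anticipated is a repair that the paper does not supply.
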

\begin{proof}[\bf Proof]
Let $\rho_1=1, \rho_2,\rho_3,\cdots,p_k$ be the $k$ distinct ${\bf R}$-eigenvalues.
Since ${\bf R}$ is a real symmetric matrix, it must be diagonalizable, and thus
the minimal polynomial of ${\bf R}$ is $(\lambda-\rho_1)(\lambda-\rho_2)\cdots(\lambda-\rho_k)$.
Hence, $$\Pi_{i=1}^n({\bf R}-\rho_iI) = O, \quad \mbox{that is,} \quad
({\bf R}-\rho_1I)(\Pi_{i=2}^n({\bf R}-\rho_iI)) = O.$$
Since $G$ is connected,  by Perron-Frobenius Theorem we know that the
algebraic multiplicity of $\rho_1 = 1$ is one, and so is the geometric multiplicity.
Consequently, each column of $H=\Pi_{i=2}^n({\bf R}-\rho_iI) = (h_1,h_2,\cdots,h_n)$
is a scalar multiple of the Perron-Frobenius vector $\alpha$.
Set $h_i = a_i\alpha$ ($1 \leq i \leq n$). So, $H= \alpha(a_1,a_2,\cdots,a_n)$
and thus $$\alpha^TH = \alpha^T\alpha(a_1,a_2,\cdots,a_n).$$
By a direct calculation we have
$$\prod_{i=2}^k(1-\rho_i)\alpha^T = 2m(a_1,a_2,\cdots,a_n),$$
leading to $$a_i =
\frac{\prod_{i=2}^k(1-\rho_i)}{2m}\sqrt{d_i}\;(i=1,2,\cdots,k).$$
The necessity thus follows.

For the sufficiency, multiplying ${\bf R}-\rho_1I$ ($\rho_1=1$) to
both sides of \eqref{rho=alpha}, we arrive at $$({\bf
R}-\rho_1I)\prod_{i=2}^k({\bf R}-\rho_iI) =
\frac{\prod_{i=2}^k(1-\rho_i)}{2m}(({\bf R}-I)\alpha)\alpha^T = O.$$
So, $m(x)=(x-\rho_1)(x-\rho_2)\cdots(x-\rho_k)$ is an annihilating
polynomial of ${\bf R}$, i.e., a polynomial with value at ${\bf R}$
is a 0-matrix. From \eqref{rho=alpha} follows $\prod_{i=2}^k({\bf
R}-\rho_iI) \neq O$, which shows that the product of some 
factors taken from $\{x-\rho_2,\cdots,x-\rho_k\}$ is not a minimal 
polynomial of ${\bf R}$. Hence, $m(x)$
is the minimal polynomial, and thus $1,\rho_2,\cdots,\rho_k$ are the
$k$ distinct ${\bf R}$-eigenvalues.
\end{proof}

Bozkurt et al. \cite{bozkurt-gungor-gutman2} determined the
connected graphs with two distinct ${\bf R}$-eigenvalues. We now
give another short proof based on the above theorem.

\begin{cor}
A connected graph $G$ has exactly two and distinct ${\bf
R}$-eigenvalues if and only if $G$ is a complete graph.
\end{cor}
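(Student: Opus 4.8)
The plan is to invoke Theorem \ref{k-distinct-R-eigen} in the special case $k=2$ and read off the entries of the resulting matrix identity. For the sufficiency direction I would simply exhibit the eigenvalues of $K_n$ directly: since $K_n$ is $(n-1)$-regular, its Randi\'c matrix is ${\bf R}=\frac{1}{n-1}(J-I)$, where $J$ is the all-ones matrix. As $J$ has eigenvalues $n$ (simple) and $0$ (with multiplicity $n-1$), the matrix ${\bf R}$ has eigenvalues $1$ (simple) and $-\frac{1}{n-1}$ (multiplicity $n-1$), i.e. exactly two distinct ${\bf R}$-eigenvalues, so one direction is immediate.

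For the necessity direction I would specialize \eqref{rho=alpha} to $k=2$: the hypothesis gives a single non-one real number $\rho_2$ with ${\bf R}-\rho_2 I=\frac{1-\rho_2}{2m}\,\alpha\alpha^T$. Here $(\alpha\alpha^T)_{ij}=\sqrt{d_i}\,\sqrt{d_j}$. Comparing the diagonal entries, and using $({\bf R})_{ii}=0$ together with $(\alpha\alpha^T)_{ii}=d_i$, yields $-\rho_2=\frac{1-\rho_2}{2m}\,d_i$ for every $i$; since the left-hand side is independent of $i$, all the degrees $d_i$ coincide, so $G$ is $d$-regular for some $d\geq 1$. Next I would compare the off-diagonal entries: for $i\neq j$ the identity forces $({\bf R})_{ij}=\frac{1-\rho_2}{2m}\sqrt{d_i d_j}=\frac{(1-\rho_2)d}{2m}$, a value that does not depend on the pair $(i,j)$.

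The key point, and the only place requiring a touch of care, is to observe that this common off-diagonal value is \emph{nonzero}: this holds because $\rho_2\neq 1$ (the ``none-one'' condition of the theorem) and $d\geq 1$. On the other hand, each off-diagonal entry $({\bf R})_{ij}$ equals $\frac{1}{d}$ if $v_i\sim v_j$ and $0$ otherwise. Since they must all equal the same nonzero constant, no pair of vertices can be non-adjacent, whence $G=K_n$. I expect the main obstacle to be precisely this nonvanishing argument together with the clean extraction of regularity from the diagonal comparison; once those are in place, the conclusion follows by simply matching matrix entries, with no further computation.
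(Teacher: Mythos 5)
Your proof is correct, and it rests on the same key tool as the paper --- Theorem \ref{k-distinct-R-eigen} specialized to $k=2$ --- but the execution is genuinely different and, in fact, more careful. The paper's proof substitutes the \emph{known} eigenvalues $1$ and $-\frac{1}{n-1}$ of the complete graph into \eqref{rho=alpha} and reads off only the diagonal entries to get $d_i=n-1$; this is terser, but it quietly presupposes information that the necessity direction does not yet have. To know $\rho_2=-\frac{1}{n-1}$ one must first argue via $\operatorname{tr}{\bf R}=0$ that $1+(n-1)\rho_2=0$, and even then the coefficient in \eqref{rho=alpha} is $\frac{1-\rho_2}{2m}=\frac{n}{2m(n-1)}$, which equals the paper's $\frac{1}{(n-1)^2}$ only if $2m=n(n-1)$ --- i.e.\ only once completeness is already known. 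Your argument sidesteps both issues: you keep $\rho_2$ generic, extract regularity from the diagonal entries (note that $\rho_2\neq 1$ makes the degree formula well defined, and connectedness rules out $d_i=0$), and then obtain completeness from the off-diagonal entries via the nonvanishing observation $(1-\rho_2)d/2m\neq 0$, which is exactly the step the paper's diagonal-only inspection cannot supply on its own. Your direct sufficiency computation with ${\bf R}=\frac{1}{n-1}(J-I)$ also replaces the paper's citation of \cite{cve}, making the whole proof self-contained. One trivial loose end shared with the paper: Theorem \ref{k-distinct-R-eigen} is stated for $n\geq 3$, so the case $n=2$ (where $K_2$ is the only connected graph, with eigenvalues $\pm 1$) should be noted separately.
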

\begin{proof}[\bf Proof]
It is known that the complete graph of order $n$ has exactly two
distinct ${\bf R}$-eigenvalues $1$ and $-\frac{1}{n-1}$ \cite{cve}.
Substituting $1$ and $-\frac{1}{n-1}$ into Eq. \eqref{rho=alpha} we
get
$${\bf R}(G) = \frac{1}{(n-1)^2}\alpha\alpha^T - \frac{1}{n-1}I.$$
Considering the diagonal entries in both sides of the above equality,
we have $$\frac{1}{(n-1)^2}d_i - \frac{1}{n-1} = 0,$$ and so $d_i = n-1$
($i = 1,2,\cdots,n$). Hence, $G$ is a complete graph.
\end{proof}

For the graph with exactly three and distinct ${\bf R}$-eigenvalues,
the following characterization is given. We denote the number of
common neighbors by $\delta_{ij}$ if vertices $v_i$ and $v_j$ are
adjacent, and by $\sigma_{ij}$ if they are not.

\begin{cor}\label{three-R-characterization}
Let $c=\frac{\prod_{i=2}^k(1-\rho_i)}{2m}$. A connected graph $G$
has exactly three and distinct ${\bf R}$-eigenvalues
$1,\rho_2,\rho_3$ if and only if the following items hold:
\begin{itemize}
\item[$\mathrm{(i)}$]
for any vertex $u_i$, $\sum_{v_j \sim u_i}\frac{1}{d_j} = cd_i^2 -
\rho_2\rho_3d_i$,
\item[$\mathrm{(ii)}$]
for adjacent vertices $u_i$ and $v_j$, $\delta_{ij} = cd_id_j +
\rho_2 + \rho_3$,
\item[$\mathrm{(iii)}$]
for nonadjacent vertices $u_i$ and $v_j$, $\sigma_{ij} = cd_id_j$.
\end{itemize}
\end{cor}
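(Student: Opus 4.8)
The plan is to specialize Theorem \ref{k-distinct-R-eigen} to the case $k=3$ and then read off the entrywise content of the resulting matrix identity. For $k=3$ the defining relation \eqref{rho=alpha} reads
$$({\bf R}-\rho_2 I)({\bf R}-\rho_3 I) = c\,\alpha\alpha^T, \qquad c=\frac{(1-\rho_2)(1-\rho_3)}{2m},$$
and Theorem \ref{k-distinct-R-eigen} tells us that $G$ has exactly three distinct ${\bf R}$-eigenvalues $1,\rho_2,\rho_3$ if and only if such distinct real numbers $\rho_2,\rho_3$ (both different from $1$) exist making this identity hold. So the whole corollary reduces to translating this single matrix equation into scalar conditions on the entries, in the three cases $i=j$, $v_i\sim v_j$, and $v_i\not\sim v_j$.

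First I would expand the left-hand side as ${\bf R}^2-(\rho_2+\rho_3){\bf R}+\rho_2\rho_3 I$, and recall that ${\bf R}=D^{-1/2}AD^{-1/2}$ has entries $r_{ij}=a_{ij}/\sqrt{d_id_j}$, where $a_{ij}$ is the $(i,j)$-entry of the adjacency matrix; in particular the diagonal of ${\bf R}$ vanishes and $r_{ij}=1/\sqrt{d_id_j}$ exactly when $v_i\sim v_j$. The key computation is the $(i,j)$-entry of ${\bf R}^2$,
$$({\bf R}^2)_{ij}=\sum_{k}r_{ik}r_{kj}=\frac{1}{\sqrt{d_id_j}}\sum_{v_k\sim v_i,\,v_k\sim v_j}\frac{1}{d_k},$$
the sum running over the common neighbours of $v_i$ and $v_j$, each weighted by $1/d_k$. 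The right-hand side has entry $c\,(\alpha\alpha^T)_{ij}=c\sqrt{d_id_j}$, since $\alpha^T=(\sqrt{d_1},\dots,\sqrt{d_n})$.

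Then I would equate entries in the three cases. On the diagonal ($i=j$) one has $({\bf R})_{ii}=0$ and $({\bf R}^2)_{ii}=d_i^{-1}\sum_{v_k\sim v_i}1/d_k$, so the identity becomes $d_i^{-1}\sum_{v_k\sim v_i}1/d_k+\rho_2\rho_3=cd_i$; multiplying through by $d_i$ gives (i). For an adjacent pair $v_i\sim v_j$ the middle term contributes $-(\rho_2+\rho_3)/\sqrt{d_id_j}$, and clearing the common factor $1/\sqrt{d_id_j}$ against $c\sqrt{d_id_j}$ yields (ii). For a nonadjacent pair $({\bf R})_{ij}=0$, and the same cancellation of prefactors yields (iii). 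Conversely, (i)--(iii) are precisely the entrywise form of the displayed matrix identity, so their joint validity returns us to \eqref{rho=alpha} with $k=3$, whence Theorem \ref{k-distinct-R-eigen} delivers the three distinct eigenvalues $1,\rho_2,\rho_3$.

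The computation is essentially mechanical once the entries of ${\bf R}^2$ are in hand, so the only genuine point of care — and the step I expect to be the main obstacle — is the bookkeeping of the degree weights: $({\bf R}^2)_{ij}$ is \emph{not} the bare count of common neighbours but the weighted sum $\sum 1/d_k$, and one must carry the $1/\sqrt{d_id_j}$ prefactors through so that they cancel correctly against the $\sqrt{d_id_j}$ arising from $\alpha\alpha^T$. Accordingly, for the final statement to be literally correct the symbols $\delta_{ij}$ and $\sigma_{ij}$ in (ii)--(iii) should be interpreted as these $1/d_k$-weighted common-neighbour sums $\sum_{v_k\sim v_i,\,v_k\sim v_j}1/d_k$, rather than as plain counts of common neighbours.
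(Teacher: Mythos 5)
Your proof is correct and is essentially the paper's own argument: the paper likewise specializes Theorem \ref{k-distinct-R-eigen} to $k=3$, obtains $({\bf R}-\rho_2 I)({\bf R}-\rho_3 I)=c\,\alpha\alpha^T$, and reads off the diagonal and off-diagonal entries exactly as you do. Your closing caveat is a genuine (and correct) improvement on the paper: since $({\bf R}^2)_{ij}=\frac{1}{\sqrt{d_id_j}}\sum_{v_k\sim v_i,\,v_k\sim v_j}\frac{1}{d_k}$, the quantities $\delta_{ij}$ and $\sigma_{ij}$ in (ii)--(iii) must be these $1/d_k$-weighted common-neighbour sums, not the plain counts the paper's preceding text defines (the two agree only up to the factor $1/d$ in the regular case underlying Corollary 3.5).
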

\begin{proof}[\bf Proof]
From Theorem \ref{k-distinct-R-eigen} we get $({\bf R}-\rho_2I)({\bf
R}-\rho_3I) = c\alpha\alpha^T.$ Then the results follow by
considering the diagonal entries and nondiagonal entries for both
sides of this equality.
\end{proof}

Note that a $k$-regular graph of order $n$ ($0 < k < n-1$) is
{\it strong regular} with parameters $(n,k,\delta,\sigma)$
if the number of common neighbors of any two distinct vertices equals
$\delta$ if the vertices are adjacent and $\sigma$ otherwise \cite{brouwer-haemers}.
The following result follows from Corollary \ref{three-R-characterization}.

\begin{cor}
A regular connected graph has exactly three and distinct ${\bf
R}$-eigenvalues if and only if it is strong regular.
\end{cor}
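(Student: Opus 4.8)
The plan is to exploit the fact that for an $r$-regular graph $G$ one has $D = rI_n$, so that ${\bf R}(G) = D^{-\frac{1}{2}}AD^{-\frac{1}{2}} = \frac{1}{r}A$. Consequently the ${\bf R}$-eigenvalues are precisely $\frac{1}{r}$ times the adjacency eigenvalues, and $G$ has exactly three distinct ${\bf R}$-eigenvalues if and only if $A$ has exactly three distinct eigenvalues. This reduction is what lets me translate the ${\bf R}$-spectral condition into the combinatorial language of Corollary \ref{three-R-characterization} and back.

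For the necessity I would assume $G$ is connected, $r$-regular, and has exactly three distinct ${\bf R}$-eigenvalues $1,\rho_2,\rho_3$. Having three distinct eigenvalues rules out the complete graph (which has two, by the preceding corollary) and the edgeless graph, so $0 < r < n-1$ and ``strongly regular'' is meaningful. I then apply Corollary \ref{three-R-characterization} and substitute $d_i = d_j = r$ into items (ii) and (iii): the right-hand sides $cr^2 + \rho_2 + \rho_3$ and $cr^2$ no longer depend on the chosen pair. Hence $\delta_{ij}$ equals the constant $\delta := cr^2 + \rho_2 + \rho_3$ for every adjacent pair and $\sigma_{ij}$ equals the constant $\sigma := cr^2$ for every nonadjacent pair, which is exactly the definition of a strongly regular graph with parameters $(n,r,\delta,\sigma)$. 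Item (i) specializes to the vertex-independent scalar identity $1 = cr^2 - \rho_2\rho_3 r$, which is automatically consistent and imposes no further per-vertex combinatorial constraint.

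For the sufficiency I would start from a strongly regular $G$ with parameters $(n,r,\delta,\sigma)$ and use the classical identity $A^2 = rI_n + \delta A + \sigma(J - I_n - A)$, where $J$ is the all-ones matrix. Restricting to the subspace $\mathbf{1}^\perp$, on which $J$ acts as the zero operator, this becomes $A^2 - (\delta-\sigma)A - (r-\sigma)I_n = O$, so every adjacency eigenvalue other than the Perron value $r$ is a root of $x^2 - (\delta-\sigma)x - (r-\sigma)$. Together with $r$ this gives exactly three distinct adjacency eigenvalues, and hence, through ${\bf R} = \frac{1}{r}A$, exactly three distinct ${\bf R}$-eigenvalues. (Equivalently, one could verify items (i)--(iii) of Corollary \ref{three-R-characterization} directly with $\rho_{2},\rho_{3}$ taken as $\frac{1}{r}$ times the two quadratic roots, but the spectral identity is cleaner.)

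The main obstacle is not any single hard computation but careful bookkeeping of the degenerate cases. One must confirm that three distinct ${\bf R}$-eigenvalues genuinely excludes the complete and empty graphs, so that the range $0 < r < n-1$ required by the definition of strong regularity holds, and, in the sufficiency direction, that the two roots of the quadratic are real and distinct both from each other and from $r$. Once these edge cases are dispatched, the equivalence follows at once from the constancy of the right-hand sides in Corollary \ref{three-R-characterization}.
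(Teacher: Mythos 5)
Your proposal is correct, and it diverges from the paper in an instructive way. The paper offers no written argument at all---it simply asserts that the corollary ``follows from Corollary \ref{three-R-characterization}'', the intended reading being precisely your necessity argument: with $d_i=d_j=r$ the right-hand sides of items (ii) and (iii) become the constants $cr^2+\rho_2+\rho_3$ and $cr^2$, so common-neighbor counts are constant over adjacent and over nonadjacent pairs, while three distinct eigenvalues exclude the complete graph (giving $r<n-1$, so nonadjacent pairs exist) and connectivity gives $r>0$. Where you genuinely depart is sufficiency: rather than producing $\rho_2,\rho_3$ and the constant $c$ from the parameters $(n,r,\delta,\sigma)$ and verifying items (i)--(iii) of Corollary \ref{three-R-characterization} --- which requires solving a small nonlinear system, since $c$ itself depends on $\rho_2,\rho_3$ --- you invoke ${\bf R}=\frac{1}{r}A$ and the classical identity $A^2=rI_n+\delta A+\sigma(J-I_n-A)$, which is cleaner. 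The edge cases you flag but leave open do need closing, and they close uniformly: multiplying the identity by $A-rI_n$ and using $(A-rI_n)J=O$ yields the cubic annihilating polynomial $(x-r)\bigl(x^2-(\delta-\sigma)x-(r-\sigma)\bigr)$, so $A$ has at most three distinct eigenvalues; on the other hand a connected non-complete graph has diameter at least $2$, hence minimal polynomial of degree at least $3$, hence at least three distinct eigenvalues. Together these show the quadratic's roots are real, distinct from each other, and distinct from $r$, with no discriminant computation (alternatively: $\delta\le r-1$ forces a positive discriminant, and $r$ being a root of the quadratic would force $\sigma n=0$ via the standard relation $r(r-\delta-1)=(n-r-1)\sigma$, impossible for a connected non-complete graph, where $\sigma\ge 1$). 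In short, your necessity matches the paper's intended one-line specialization, and your sufficiency is a different and arguably tidier route, at the price of the multiplicity bookkeeping sketched above.
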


From \eqref{R-Q-L-eigen} it follows that a connected graph has
exactly $k$ distinct ${\bf R}$-eigenvalues if and only if it has $k$
distinct $\mathcal{L}$-ones. van Dam and Omidi \cite{dam-omidi}
found such graphs and pointed out that a complete classification of
such graphs still seems out of reach. In subsequent work, it seems
interesting to determine connected graphs with exactly four or more
and distinct ${\bf R}$-eigenvalues. Furthermore, due to $\mathcal{L}
= I - {\bf R}$, it seems much simpler to investigate on this topic
by the ${\bf R}$-eigenvalues.


\begin{thebibliography}{11}

\bibitem{bozkurt-gungor-gutman1} \c{S}.B. Bozkurt,  A.D. G\"{u}ng\"{o}r,
I. Gutman, A.S. \c{C}evik,  Randi\'c matrix and Randi\'c energy,
MATCH Commun. Math. Comput. Chem. 64 (2010) 239--250.

\bibitem{bozkurt-gungor-gutman2} \c{S}.B. Bozkurt, A.D. G\"{u}ng\"{o}r, I. Gutman,
Randi\'c spectral radius and Randi\'c energy, MATCH Commun. Math. Comput.
Chem. 64 (2010) 321--334.

\bibitem{brouwer-haemers} A.E. Brouwer, W.H. Haemers, Spectra of graphs,
http://homepages.cwi.nl/$^\thicksim$aeb/math/ipm.pdf (October 2010).

\bibitem{cavers-fallat-kirkland} M. Cavers, S. Fallat, S. Kirkland, On the
normalized Laplacian energy and general Randi\'c index $R_1$ of graphs,
Linear Algebra Appl. 433 (2010) 172--190.

\bibitem{chung} F.R.K. Chung, Spectral Graph Theory, Amer. Math. Soc., Providence, 1997.

\bibitem{cve} D.M. Cvetkovi\'c, M. Doob, H. Sachs, Spectra
of Graphs - Theory and Applications, Academic Press, New York, San
Francisco, London, 1980.

\bibitem{dam-thesis} E.R. van Dam, Graphs with few eigenvalues: An interplay
between combinatorics and algebra, PhD Thesis, Tilburg University, 1996.

\bibitem{dam-omidi} E.R. van Dam, G.R. Omidi, Graphs whose normalized Laplacian
has three eigenvalues, Linear Algebra Appl. 435 (2011) 2560--2569.

\bibitem{gutman1} I. Gutman, The energy of a graph, Ber. Math. Statist. Sekt.
Forschungsz. Graz, 103(1978), 1--22.

\bibitem{gutman-Furtula} I. Gutman, B. Furtula (Eds.), Recent Results in the
Theory of Randi\'c Index, Univ. Kragujevac, Kragujevac, 2008.

\bibitem{gutman-Furtula-bozkurt} I. Gutman, B. Furtula, S.B. Bozkurt, On Randi\'c
enegy, Linear Algebra Appl. 442 (2014) 50--57.

\bibitem{li-gutman} X. Li, I. Gutman, Mathematical Aspects of Randi\'c-Type
Molecular Structure Descriptors, Univ. Kragujevac, Kragujevac, 2006.

\bibitem{li-shi} X. Li, Y. Shi, A survey on the Randi\'c index, MATCH Commun.
Math. Comput. Chem. 59 (2008) 127--156.

\bibitem{li-shi-gutman} X. Li, Y. Shi, I. Gutman, Graph Energy, Springer,
NewYork, 2012.

\bibitem{liu-huang-feng} B. Liu, Y. Huang, J. Feng, A note on the Randi\'c
spectral radius, MATCH Commun. Math. Comput. Chem. 68 (2012) 913--916.

\bibitem{niki} V. Nikiforov, The energy of graphs and matrices,
J. Math. Anal. Appl. 326 (2007) 1472--1475.

\bibitem{radic-index} M. Randi\'c, On characterization of molecular branching,
J. Amer. Chem. Soc. 97 (1975) 6609--6615.

\bibitem{radic-history} M. Randi\'c, On history of the Randi\'c index and
emerging hostility toward chemical graph theory, MATCH Commun. Math. Comput.
Chem. 59 (2008) 5--124.


\end{thebibliography}
\end{document}